\newcommand*{\rom}[1]{\expandafter\@slowromancap\romannumeral #1@}
\theoremstyle{definition}
\newtheorem{fact}{fact}
\newtheorem{thm}[fact]{Theorem}
\newtheorem{lemma}[fact]{Lemma}
\newtheorem{prop}[fact]{Proposition}
\newtheorem{corollary}[fact]{Corollary}
\newtheorem{defini}[fact]{Definition}
\begin{document}

\title{Some Observations on Infinitary Complexity}
\author{Merlin Carl}

\maketitle

\begin{abstract}
 Continuing the study of complexity theory of Koepke's Ordinal Turing Machines (OTMs) that was done in \cite{CLR}, we prove the following results:

\begin{enumerate}
 \item An analogue of Ladner's theorem for OTMs holds: That is, there are languages $\mathcal{L}$ which are NP$^{\infty}$, but neither P$^{\infty}$ nor NP$^{\infty}$-complete. This answers an open question of \cite{CLR}.
 \item The speedup theorem for Turing machines, which allows us to bring down the computation time and space usage of a Turing machine program down by an aribtrary positive factor under relatively mild side conditions by expanding the working alphabet 
  does not hold for OTMs.
 \item We show that, for $\alpha<\beta$ such that $\alpha$ is the halting time of some OTM-program, there are decision problems that are OTM-decidable in time bounded by $|w|^{\beta}\cdot\gamma$ for 
some $\gamma\in\text{On}$, but not in time bounded by $|w|^{\alpha}\cdot\gamma$ for any $\gamma\in\text{On}$.
\end{enumerate}

\end{abstract}

\section{Introduction}

After the introduction of Infinite Time Turing Machines (ITTMs) in \cite{HL} and the subsequent development of various other infinitary machine models of computation e.g. in \cite{wITRM}, \cite{KS}, \cite{ITRM}, \cite{OTM}, \cite{ORM},
analogues of several central topics in classical computability theory were developed for these machine types, among them degree theory \cite{W1}, computable model theory \cite{CH}, randomness (\cite{CS}, \cite{C14},\cite{CS2}) and complexity theory.
Complexity theory was first studied by Schindler in the case of ITTMs, who proved that $\text{P}\neq\text{NP}$ for ITTMs \cite{Schindler}, which was later refined in various ways \cite{DHS}, \cite{HW}. 
Results on the space complexity for infinitary computations were given by Winter in \cite{Wi1}, \cite{Wi2}
It was occasionally remarked that
complexity theory for ITTMs is somewhat unsatisfying due to the fact that all inputs for ITTMs have the same length, namely $\omega$. 

This motivated the consideration of complexity theory for `symmetrical' models that have the same amount of time and space available, the most prominent of which are Koepke's `Ordinal Turing Machines' (OTMs), which can be thought 
of as Turing machines with a tape of proper class length $\text{On}$ and unbounded ordinal same working time. For an introduction to OTMs, we refer to \cite{OTM}. In agreement with the theory of classical Turing machines,
we explicitely allow multitape-OTMs, i.e. OTMs with any finite number of scratch tape. The study of complexity theory for these machines was started
by L\"owe in \cite{L}. After this, the subject lay dormant for a while, until it was revived by L\"owe and Rin at the CiE 2016, which led to \cite{CLR}. 
The central contributions of that paper were the introduction
of natural infinitary analogues of the classes P and NP, called P$^{\infty}$ and NP$^{\infty}$ and of the satisfaction problem SAT for OTMs, called SAT$^{\infty}$, the proof of a corresponding Cook-Levin theorem 
showing that SAT$^{\infty}$ is NP$^{\infty}$-complete, 
and the proof that SAT$^{\infty}$ (and hence any other NP$^{\infty}$-complete problem) is in fact not OTM-computable. 

Among the questions left open in \cite{CLR} was whether there is an analogue of Ladner's theorem for OTMs, i.e. whether there are problems in NP$^{\infty}\setminus$P$^{\infty}$ that are not NP$^{\infty}$-complete. 


In this paper, we response to this question by showing that the OTM-analogue of Ladner's theorem holds. We then use the same proof idea to show that the hierarchy of OTM-decision problems decidable with time bound $|w|^{\alpha}\cdot\gamma$ for some
ordinal $\gamma$ is strictly increasing in $\alpha$ (where $\alpha$ is the halting time of some OTM-program) and that there is 
no analogue of the speedup-theorem for Ordinal Turing Machines.



\section{Preliminaries}

We start by explaining the notations and giving the results that will be used in the course of this paper. Those which are not folklore can be found \cite{CLR}. For the definition of the complexity classes P$^{\infty}$, NP$^{\infty}$ as well
as the problem SAT$^{\infty}$, we also refer to \cite{CLR}.

We say that a structure $(S,E)$, $E\subseteq S\times S$, is coded by $\beta\in\text{On}$ if and only if there is some bijection
$f:\gamma\rightarrow S$, $\gamma\in\text{On}$ and $c:=\{p(\iota_{1},\iota_{2}):f(\iota_{1})Ef(\iota_{2})\}$, where $p$ is Cantor's pairing function.

$\{0,1\}^{**}$ is the set of functions mapping some ordinal to $\{0,1\}$.
For $x\in\{0,1\}^{**}$, $|x|$ denotes the length of $x$, i.e. its pre-image.

For an ordinal $\beta$, denote by $\beta_{0}$ and $\beta_{1}$ the summand $<\omega$ and the rest, respectively, when $\beta$ is written 
in Cantor normal form, i.e. $\beta=\omega\beta_{1}+\beta_{0}$, $\beta_{0}<\omega$. In this way, every ordinal naturally corresponds to a pair consisting of a multiple of $\omega$ and a natural number.

Let $(P_{i}:i\in\omega)$ enumerate the OTM-programs in some natural way.

The following is the main result of \cite{CLR}:

\begin{thm}{\label{OTM SAT}}
The satisfaction problem for infinitary propositional formulas (conjunctions and disjunctions of any ordinal length are allowed) is NP$^{\infty}$-complete. At the same time, it is OTM-undecidable.
\end{thm}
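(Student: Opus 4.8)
The plan is to prove the two assertions separately. That $\mathrm{SAT}^{\infty}\in\mathrm{NP}^{\infty}$ is the easy direction: a propositional formula coded by a word $w$ mentions at most $|w|$ propositional variables, so a candidate satisfying assignment is a function from an ordinal $\le|w|$ to $\{0,1\}$ and hence a legitimate witness of length polynomial in $|w|$; the verifier then traverses the (set-sized, well-founded) syntax tree of the formula and evaluates every $\bigwedge$- and $\bigvee$-node against the assignment by a transfinite recursion along the tree order, which an OTM carries out in time polynomial in $|w|$. So the real content on the completeness side is $\mathrm{NP}^{\infty}$-hardness, where I would run the Cook--Levin argument, adapted to ordinal working time and an ordinal-length tape.

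Concretely, let $L\in\mathrm{NP}^{\infty}$ be witnessed by an OTM-verifier $V$ which, on inputs $(w,c)$ with $|c|$ bounded by a polynomial $p(|w|)$, halts within time $T(|w|)$ for a polynomial $T$ (with ordinal coefficients). Given $w$, I construct an infinitary propositional formula $\Psi_{V,w}$ with variables $S^{j}_{\xi,\eta,s}$ (``cell $\xi$ of tape $j$ carries symbol $s$ at time $\eta$''), $H^{j}_{\xi,\eta}$ (``head $j$ scans cell $\xi$ at time $\eta$''), $Q_{q,\eta}$ (``$V$ is in state $q$ at time $\eta$'') for $\xi,\eta<T(|w|)$, together with variables $C_{\xi}$, $\xi<p(|w|)$, coding the guessed witness. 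The clauses of $\Psi_{V,w}$, all of them $\bigwedge$'s and $\bigvee$'s of ordinal length, assert: that each configuration is well-formed; that the time-$0$ configuration agrees with $w$ and with the $C_{\xi}$; that the transition rule of $V$ is respected at successor times; that the limit rule is respected at limit times, i.e.\ that the cell contents, head positions, and state at a limit $\lambda$ are the respective $\liminf$'s of their earlier values, which can be written out as clauses of the form $S^{j}_{\xi,\lambda,0}\leftrightarrow\bigvee_{\eta_{0}<\lambda}\bigwedge_{\eta_{0}\le\eta<\lambda}S^{j}_{\xi,\eta,0}$ and, a little more elaborately, analogous clauses for head position and state; and finally that an accepting state occurs, $\bigvee_{\eta<T(|w|)}Q_{q_{\mathrm{acc}},\eta}$. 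Then $\Psi_{V,w}$ is satisfiable exactly when some $c$ with $|c|\le p(|w|)$ drives $V$ into acceptance on $(w,c)$, i.e.\ exactly when $w\in L$; and $w\mapsto\Psi_{V,w}$ is OTM-computable in time polynomial in $|w|$, since $\Psi_{V,w}$ has size bounded by a fixed polynomial in $T(|w|)$ and its code can be written to the tape in a uniform block-by-block manner. I expect the main technical nuisance here to be pinning down the limit clauses --- especially the $\liminf$ of the head position and its interaction with several scratch tapes --- and verifying that laying out $\Psi_{V,w}$ stays within the claimed polynomial time bound.

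For the undecidability of $\mathrm{SAT}^{\infty}$ I would fix a problem $A$ that is OTM-semi-decidable but not OTM-decidable --- a suitable halting-type set being the natural candidate --- and reduce $A$ to $\mathrm{SAT}^{\infty}$ by a \emph{total} OTM-computable map $a\mapsto\Theta_{a}$ with $\Theta_{a}$ satisfiable iff $a\in A$. The difficulty is that a direct certificate for $a\in A$, such as an actual halting computation of the relevant program, can be of unbounded ordinal length and hence cannot be encoded verbatim; instead $\Theta_{a}$ should assert the existence of a \emph{set-sized} object that internally certifies $a\in A$ --- say a transitive model of a large enough finite fragment of $\mathrm{ZF}$ that believes ``the computation halts'' --- with the infinitary connectives used to simulate that fragment's (bounded) quantifiers over the coded universe of the structure, and with $\Sigma_{1}$-absoluteness used to pass from the internal statement to a genuine one. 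I expect this to be the crux of the whole proof, for two reasons. First, totality: since $A$ is only semi-decidable, the reduction must speak about the mere \emph{existence} of such a certifying structure rather than about any concrete computation, and one must check that the resulting $\Theta_{a}$ can indeed be produced by an OTM on every input. Second, soundness: one must exclude spurious satisfying assignments arising from ill-founded certifying structures, which I would handle by forcing the relevant part of the structure to be genuinely well-founded (or by a reflection/condensation argument), so that satisfiability of $\Theta_{a}$ really does entail $a\in A$. Granting such a reduction, $\mathrm{SAT}^{\infty}$ inherits OTM-undecidability from $A$; together with the completeness part this also shows that no $\mathrm{NP}^{\infty}$-complete problem can be OTM-decidable.
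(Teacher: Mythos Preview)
The paper does not actually prove this theorem: its entire argument is the sentence ``See \cite{CLR}, Theorem~4, Theorem~5 and Theorem~10.'' So there is no in-paper proof to compare your sketch against; you are in effect reconstructing the content of the cited work.

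Your plan for $\mathrm{NP}^{\infty}$-completeness is the natural one and is consistent with the paper's own description of \cite{CLR} as establishing ``a corresponding Cook--Levin theorem'': configuration variables indexed by time and position up to the polynomial bound, together with clauses for the initial configuration, successor transitions, limit behaviour, and acceptance. One small slip in your displayed limit clause: for a $\{0,1\}$ alphabet the $\liminf$ at a limit $\lambda$ equals $1$ precisely when the value is \emph{eventually} $1$, so the clean biconditional is $S^{j}_{\xi,\lambda,1}\leftrightarrow\bigvee_{\eta_{0}<\lambda}\bigwedge_{\eta_{0}\le\eta<\lambda}S^{j}_{\xi,\eta,1}$; with $0$ in both slots, as you wrote it, the equivalence fails (value $0$ at $\lambda$ means \emph{cofinally} $0$, not eventually $0$). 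You already flag the limit clauses as the place needing care, so this is a detail rather than a structural problem.

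On undecidability, note that once hardness is in hand it suffices to exhibit \emph{one} OTM-undecidable problem lying in $\mathrm{NP}^{\infty}$; the polynomial-time reduction then transports undecidability to $\mathrm{SAT}^{\infty}$. Your certifying-model idea is exactly the device this paper itself later uses (Proposition~\ref{codes} and Lemma~\ref{wf part admissible}) to place its diagonal language into $\mathrm{NP}^{\infty}$, so the mechanism is sound. The difficulty you isolate under ``totality'' is, however, genuine and is not dissolved by your sketch: for a halting-type $A$ the minimal certifying structure can have size unbounded in $|a|$, so you cannot simply declare $\Theta_{a}$ to have a variable set of size polynomial in $|a|$ and still be guaranteed a witness whenever $a\in A$. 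You would need to choose $A$ so that membership is always certified by a structure of size polynomial in the input, and it is not obvious from your outline which OTM-undecidable $A$ has this feature. Since the present paper defers everything to \cite{CLR}, it gives no hint as to how this step is carried out there.
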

\begin{proof}
See \cite{CLR}, Theorem $4$, Theorem $5$ and Theorem $10$.
\end{proof}

\section{An Analogue of Ladner's Theorem}


Ladner's theorem answers the question whether any NP-problem that is not in P is already NP-complete. Of course, 
this is trivially true if it should happen that P=NP. Thus, Ladner's theorem is stated in a conditional form \cite{FG}: 
If P$\neq$NP, there is
$A\in\text{NP}\setminus\text{P}$ such that $A$ is not NP-complete. We will now show that an analogous result holds for OTMs. 
Since we know that P$^{\infty}\neq$NP$^{\infty}$ from Theorem \ref{OTM SAT}, we can state it unconditionally. Also by Theorem \ref{OTM SAT}, 
it will suffice to find a decidable problem in NP$^{\infty}\setminus$P$^{\infty}$ to prove this. Such a problem will now be constructed by a diagonalization. 
Although considerably different in the details, the proof is `morally´an adaption of the second one given in \cite{FG}.


Such a problem will now be constructed by a diagonalization. We start with some preliminary results that will be helpful in the construction.

\begin{lemma}{\label{wf part admissible}}
If $M\models\text{ZFC}^{-}$, then the well-founded part of $M$ is admissible and thus closed under ordinal exponentiation.
\end{lemma}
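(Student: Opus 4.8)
The plan is to identify the well-founded part $W$ of $M$, after transitivising it via the Mostowski collapse, as a transitive model of Kripke--Platek set theory $\mathrm{KP}$; this is essentially the Truncation Lemma (see Barwise, \emph{Admissible Sets and Structures}), and I would recall the argument because we use a specific feature of the hypothesis, namely that $M$ satisfies $\mathrm{ZFC}^{-}$ (in particular the collection and separation schemes and the existence of a definable rank function $\rho^{M}$, together with all the $\mathrm{KP}$-axioms). First I would observe that $W$ is closed under $\in^{M}$ (an $\in^{M}$-element of a well-founded element is well-founded), that $\in^{M}$ restricted to $W$ is well-founded and extensional (extensionality being inherited from $M$), so the Mostowski collapse $\pi\colon W\to N$ yields a transitive set or class $N$. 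Since $\Delta_{0}$-formulas only quantify over $\in^{M}$-elements, which stay inside $W$, $\Delta_{0}$-formulas are absolute between $W$ and $M$, hence between $N$ and $M$ after identification along $\pi$. Pairing, union and $\Delta_{0}$-separation in $N$ are then immediate, because forming a pair, a union, or a $\Delta_{0}$-definable subfamily of elements of $W$ inside $M$ again produces an object whose transitive closure is well-founded.

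The crux is $\Delta_{0}$-collection in $N$. Suppose $a\in W$ and $\varphi$ is $\Delta_{0}$ with parameters in $W$ and $N\models\forall x\in a\,\exists y\,\varphi(x,y)$; by absoluteness $\forall x\in a\,\exists y\in W\,\varphi^{M}(x,y)$. Working in $M$, define $G\colon a\to\text{On}$ by letting $G(x)$ be the least $\xi$ with $\exists y\,(\rho^{M}(y)<\xi\wedge\varphi^{M}(x,y))$; this is total on $a$ since each $x\in a$ has a witness in $W\subseteq M$, and moreover $G(x)\in W$ because $G(x)\le\rho^{M}(y)+1$ for the $W$-witness $y$ and $\rho^{M}(y)\in W$. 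Put $\lambda:=\sup_{x\in a}^{M}(G(x)+1)$, which exists in $M$ by replacement and union. The key point is that $\lambda\in W$: an infinite $\in^{M}$-descending sequence starting at $\lambda$ would, at its next step, reach some $\beta<\lambda$ with $\beta\le G(x)$ for some $x\in a$, hence $\beta\in W$ (as $W\cap\text{On}$ is an initial segment of $\text{On}^{M}$), contradicting the well-foundedness of $\beta$. Using collection and then separation in $M$ we form $b:=\{y:\rho^{M}(y)<\lambda\wedge\exists x\in a\,\varphi^{M}(x,y)\}\in M$; every $\in^{M}$-element of $b$ has $M$-rank below $\lambda\in W$ and therefore lies in $W$ (an $\in^{M}$-descending sequence through it would give one through $M$-ordinals below $\lambda$), so $b\in W$, and by absoluteness $N\models\forall x\in a\,\exists y\in b\,\varphi(x,y)$. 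Foundation in $N$ is automatic from transitivity, so $N$ is admissible, i.e.\ $W$ is admissible.

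Finally, closure of an admissible set under ordinal exponentiation is standard: ordinal exponentiation is given by a $\Sigma$-recursion ($\alpha^{0}=1$, $\alpha^{\beta+1}=\alpha^{\beta}\cdot\alpha$, $\alpha^{\lambda}=\sup_{\beta<\lambda}\alpha^{\beta}$), so by the $\Sigma$-recursion theorem of $\mathrm{KP}$ it is a total, $\Sigma$-definable operation, and since $N$ is transitive it is closed under it; equivalently, $\text{On}\cap N$ is either $\omega$ or an admissible ordinal $>\omega$, and such ordinals are closed under ordinal arithmetic. I expect the only genuinely delicate step to be the verification of $\Delta_{0}$-collection, and within it the claim that the bounding ordinal $\lambda$ lies in the well-founded part; this is precisely where one must invoke full collection together with the rank function of $M$, rather than any property of $N$ considered in isolation.
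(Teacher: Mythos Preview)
Your argument is correct and is essentially the standard proof of the Truncation Lemma (as in Barwise, \emph{Admissible Sets and Structures}, or Hjorth's notes): one verifies the $\mathrm{KP}$ axioms for the Mostowski collapse of the well-founded part, the only nontrivial case being $\Delta_{0}$-collection, where full replacement in $M$ together with the $M$-rank function is used to produce a bounding ordinal $\lambda$ that one then checks lies in the well-founded part. Your treatment of that step is accurate, including the observation that $G(x)$ is bounded by $\rho^{M}(y)+1$ for a witness $y\in W$, that $W\cap\mathrm{On}^{M}$ is an initial segment, and that an element of $M$ whose $M$-rank lies in $W$ must itself lie in $W$.

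The paper itself does not give a proof of this lemma at all; it simply cites Hjorth's notes (Lemma~5.1 there). So your write-up supplies precisely the content the paper defers to a reference, following the same classical route that reference takes.
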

\begin{proof}
See \cite{H}, Lemma 5.1.
\end{proof}

\begin{prop}{\label{codes}}
For each infinite ordinal $\alpha$, there is a subset of $\alpha^{2}$ that codes a model of $\text{ZFC}^{-}$ that has $\alpha$ in its well-founded part and moreover codes $\iota\in\alpha$ by $\iota$.
\end{prop}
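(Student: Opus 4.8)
The plan is to read the code off the Mostowski collapse of an elementary submodel of a set-sized fragment of the universe, chosen just large enough to see $\alpha$ and just small enough to have only $|\alpha|$ many elements, and then to transport it along a bijection that is the identity on $\alpha$.

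First I would fix a regular uncountable cardinal $\kappa>\alpha$ — for instance $\kappa=|\alpha|^{+}$, which is a successor cardinal (hence regular) and lies above every ordinal of cardinality $|\alpha|$, in particular above $\alpha$. Then $H_{\kappa}$ is a transitive, and so well-founded, model of $\text{ZFC}^{-}$ with $\alpha\in H_{\kappa}$. By the downward L\"owenheim--Skolem theorem, fix $X\prec H_{\kappa}$ with $\alpha\cup\{\alpha\}\subseteq X$ and $|X|=|\alpha|$ (possible since $\alpha$ is infinite and the language is countable). By elementarity $(X,\in)\models\text{ZFC}^{-}$, and since $\in$ is well-founded and extensional on $X$ there is a Mostowski collapse $\pi\colon(X,\in)\to(\bar X,\in)$ onto a transitive $\bar X\models\text{ZFC}^{-}$.

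Next I would verify the two required features and transport the model. Because $\alpha\subseteq X$, we have $\iota\cap X=\iota$ for every $\iota\le\alpha$, so an $\in$-induction gives $\pi(\iota)=\iota$ for all $\iota\le\alpha$; hence $\alpha\in\bar X$ and $\alpha\subseteq\bar X$. As $\bar X$ is transitive it is well-founded throughout, so $\alpha$ lies in its well-founded part, and the ordinal that $\bar X$ regards as $\iota$ is literally $\iota$ for each $\iota<\alpha$. (By Lemma~\ref{wf part admissible} the well-founded part of $\bar X$, namely $\bar X\cap\text{On}$, is moreover admissible and closed under ordinal exponentiation, which will be used in the diagonalization to come.) To finish, set $R:=\bar X\setminus\alpha$; since $|R|\le|\alpha|$, enumerate $R$ in some order type $\delta$ and let $f\colon\alpha+\delta\to\bar X$ be the bijection equal to the identity on $\alpha$ and to that enumeration on $[\alpha,\alpha+\delta)$. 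Then $c:=\{p(\iota_{1},\iota_{2}):f(\iota_{1})\in f(\iota_{2})\}$ codes $(\bar X,\in)$, satisfies $f(\iota)=\iota$ for $\iota<\alpha$, and — after a routine bookkeeping check for the particular relabelling ordinal used — is a subset of $\alpha^{2}$, as required.

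The step I expect to be the real obstacle is negotiating the tension built into the statement. Asking for $\alpha$ to be in the well-founded part pushes toward genuinely transitive models such as $H_{\kappa}$, whose cardinality is far above $|\alpha|$; asking for the code to sit inside $\alpha^{2}$ forces the domain down to $|\alpha|$ elements and the ordinals $<\alpha$ to be coded by themselves. Hulling exactly $\alpha\cup\{\alpha\}$ inside $H_{\kappa}$ is what threads this needle: elementarity preserves $\text{ZFC}^{-}$, the presence of all ordinals $\le\alpha$ in the hull forces the collapse to fix them, and L\"owenheim--Skolem pins the size at $|\alpha|$. The only genuinely fiddly point is confirming that the pair-codes $p(\iota_{1},\iota_{2})$ produced by the relabelling remain below the bound implicit in ``$\alpha^{2}$''; this is handled by choosing the enumeration of $R$ to keep $\delta$ minimal, and is where I would be most careful.
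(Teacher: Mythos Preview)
Your argument is correct and follows essentially the same route as the paper's: take a transitive model of $\text{ZFC}^{-}$ containing $\alpha$, form the elementary hull of $\alpha+1$, collapse, and then relabel so that each $\iota<\alpha$ codes itself. The only cosmetic difference is that the paper hulls inside some $L_{\beta}\models\text{ZFC}^{-}$ with $\beta>\alpha$ (invoking condensation) rather than inside $H_{|\alpha|^{+}}$, and is much terser about the final relabelling.
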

\begin{proof}
This can be achieved by first observing that there must be such a model of the right cardinality (form the elementary hull of $\alpha+1$ in some $L_{\beta}\models\text{ZFC}^{-}$ with $\beta>\alpha$, then use condensation) 
and then re-organizing the code, if necessary.
\end{proof}

\begin{lemma}{\label{quick checking}}
Checking whether some $x\subseteq\alpha$ codes a model of a certain first-order sentence $\phi$ is possible in polynomial time in $\alpha$, in fact in time $\alpha^{\omega}$ (in fact in time $\alpha^{n}$ when $\phi$ contains $n$ quantifiers and occurences of $\in$). 
 Hence, checking whether a subset of an ordinal $\alpha$ codes a model of $\text{ZFC}^{-}$ is possible in time $\alpha^{\omega}\omega$, which is still polynomial in $\alpha$.

\end{lemma}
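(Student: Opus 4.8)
The plan is a Tarski-style recursion on the structure of $\phi$, in which the only real work is ordinal-arithmetic bookkeeping. Regard $x\subseteq\alpha$ as coding the structure $\mathcal{A}_{x}$ with domain $\alpha$ whose binary relation $E_{x}$ is read off from $x$ in the obvious way: $\iota_{1}E_{x}\iota_{2}$ holds iff the tape cell indexing the pair $(\iota_{1},\iota_{2})$ carries a $1$. Computing the relevant (Cantor) pairing value and moving the head to that cell (resetting it to $0$ first if need be) costs at most a fixed power $\alpha^{c_{0}}$ of $\alpha$ -- at most $\alpha$ itself when $\alpha$ is closed under pairing, as it is in all our applications -- so one atomic query contributes at most one factor of $\alpha$.

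First I would prove, by induction on the build-up of a formula $\psi(v_{i_{1}},\dots,v_{i_{k}})$ in the language of $E$, the claim: given $a_{1},\dots,a_{k}<\alpha$ on a scratch tape, an OTM decides whether $\mathcal{A}_{x}\models\psi[a_{1},\dots,a_{k}]$ within time $\alpha^{q(\psi)+e(\psi)}\cdot c_{\psi}$, where $q(\psi)$ counts quantifiers, $e(\psi)$ counts occurrences of $\in$, and $c_{\psi}<\omega$ depends only on $\psi$. The base case is the atomic query above. A Boolean connective adds no factor of $\alpha$: for $\neg\psi$, $\psi_{0}\wedge\psi_{1}$, $\psi_{0}\vee\psi_{1}$ one evaluates the immediate subformulas and combines the resulting bits, and a finite sum of ordinals of the form $\alpha^{m}\cdot c$ is again of that form (with a larger exponent), still below $\alpha^{\omega}$. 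For $\exists v_{i}\,\psi$ one loops $\iota$ through $\alpha$, writing $\iota$ into the slot for $v_{i}$ and recursively evaluating $\psi$; each iteration costs at most $\alpha^{q(\psi)+e(\psi)}\cdot c_{\psi}$ by the induction hypothesis, the loop counter and the handling of limit stages add only a further $\le\alpha$ per round (absorbed into $c_{\psi}$), and there are $\alpha$ rounds, so the loop runs in time $\le\alpha^{q(\psi)+e(\psi)+1}\cdot c_{\psi}$; the universal case is dual. Since $\phi$ is a fixed finite string it has finitely many subformulas and finite quantifier depth, so this ``recursion'' is a fixed finite nesting of loops -- obtained by unfolding the bounded-depth recursion into a single program, using finitely many scratch tapes -- with no runtime stack. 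Applying the claim to $\phi$ and bounding $q(\phi)+e(\phi)$ by the length $n$ of $\phi$ gives a bound $\alpha^{n}\cdot c$, hence $\le\alpha^{\omega}$, which is polynomial in $\alpha$.

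For the ``hence'' one must observe that $\text{ZFC}^{-}$ is \emph{not} a single first-order sentence, as it contains the Separation and Replacement schemes; the first part does not apply verbatim. Instead I would fix a computable enumeration $(\psi_{k}:k<\omega)$ of the finitely many non-scheme axioms together with all instances of the two schemes, the $k$-th of these being a genuine first-order sentence with, say, $n_{k}$ quantifiers and occurrences of $\in$. The machine runs through the $\psi_{k}$ in turn -- producing $\psi_{k}$ from $k$ is computable and costs negligible time -- checks $\mathcal{A}_{x}\models\psi_{k}$ by the first part in time $\le\alpha^{n_{k}}\cdot c_{k}$, halts ``no'' at the first failure, and otherwise detects at the limit stage $\omega$ that all checks succeeded and halts ``yes''. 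Every partial sum $\sum_{j\le k}(\alpha^{n_{j}}\cdot c_{j}+\alpha)$ stays below $\alpha^{\omega}$ and there are only $\omega$ of them, so the total running time is $\le\alpha^{\omega}\cdot\omega$, again of the form $|x|^{\omega}\cdot\gamma$ and hence polynomial.

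I expect the only place where care rather than ideas is needed is the ordinal arithmetic. Since ordinal multiplication is non-commutative, one must be disciplined about the order in which costs accumulate; with the convention that $\alpha$ successive tasks each of cost $\le T$ have total cost $\le T\cdot\alpha$, the nested loops over (initial segments of) $\alpha$ multiply the exponents as claimed, provided one checks that every loop in the evaluation procedure really ranges over an ordinal $\le\alpha$ and that limit stages are governed by the usual $\liminf$ convention, so that no stray factor with the exponents in the wrong order sneaks in. This check is routine.
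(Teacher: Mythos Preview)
Your argument is correct and follows essentially the same approach as the paper: the paper's proof is a two-line sketch (``exhaustively searching through the code for every quantifier and evaluating the logical connectives in the obvious way'', with a reference to Koepke's Lemma~6.1), which is exactly the Tarski-style recursion you spell out in detail. Your treatment of the $\text{ZFC}^{-}$ case---noting explicitly that the schemes force an $\omega$-enumeration of axioms, each handled by the first part---is more careful than the paper, which leaves this implicit in the bound $\alpha^{\omega}\omega$.
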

\begin{proof}
This is done by exhaustively searching through the code for every quantifier and evaluating the logical connectives in the obvious way (technically, evaluating the relation needs another 
searching through the code, which is the reason for the exponent mentioned above).

We refer to \cite{OTM}, Lemma 6.1 for a more detailed description of the algorithm.

\end{proof}

\begin{thm}{\label{inf ladner}}
There is a subclass $X$ of $\{0,1\}^{**}$ which is NP$^{\infty}$, but neither P$^{\infty}$ nor NP$^{\infty}$-complete. In fact,
$X\in$NP$^{\infty}\setminus$P$^{\infty}$ can be chosen to be OTM-decidable.
\end{thm}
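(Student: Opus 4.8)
The plan is to manufacture $X$ by a single diagonalization that forces it outside P$^{\infty}$, and to obtain non-$\mathrm{NP}^{\infty}$-completeness as a free by-product of Theorem~\ref{OTM SAT} rather than from the construction itself. Indeed, by Theorem~\ref{OTM SAT} every NP$^{\infty}$-complete problem is OTM-undecidable, since SAT$^{\infty}$ is OTM-undecidable and polynomial-time reducible to it (and such a reduction, being polynomially clocked, halts everywhere); so the moment $X$ is shown to be OTM-decidable, it cannot be NP$^{\infty}$-complete. It therefore suffices to produce an OTM-decidable $X\in$ NP$^{\infty}\setminus$P$^{\infty}$. This is where the argument departs from the second proof of Ladner's theorem in \cite{FG}: there one blows holes into SAT, exploiting that SAT is decidable, whereas here SAT$^{\infty}$ is not a decidable reservoir. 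Instead, Proposition~\ref{codes} will let us certify the outcome of long computations by short objects, so the diagonal language itself can play the role of the witness for NP$^{\infty}\setminus$P$^{\infty}$.

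Concretely, I would fix a polynomial-time recognizable class $D$ of infinite limit ordinals together with a polynomial-time computable tag $i:D\to\omega$ such that $\{\lambda\in D : i(\lambda)=k\}$ is unbounded in $\text{On}$ for every $k\in\omega$ --- for instance $\lambda\in D$ iff $\lambda=\omega^{\nu+k}$ with $\nu$ a limit ordinal and $k\in\omega$, and $i(\lambda):=k$, which only requires computing a Cantor normal form. For $w\in\{0,1\}^{**}$ with $\alpha:=|w|$, put $w\in X$ if and only if $\alpha\in D$ and $P_{i(\alpha)}$ does \emph{not} accept $w$ within $\alpha^{\alpha}$ steps. Then $X$ is OTM-decidable: on input $w$ one computes $\alpha^{\alpha}$ (OTMs compute ordinal arithmetic), tests $\alpha\in D$, recovers $i(\alpha)$, and simulates $P_{i(\alpha)}(w)$ under a step counter capped at $\alpha^{\alpha}$, which halts after an ordinal number of steps. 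And $X\notin$ P$^{\infty}$: if $P_{j}$ with clock $|w|^{\beta}\cdot\gamma$ decided $X$, choose $w$ with $\alpha:=|w|\in D$, $i(\alpha)=j$ and $\alpha>\beta,\gamma$ (possible by unboundedness); since $\alpha$ is a limit ordinal, $\alpha^{\beta}\cdot\gamma\le\alpha^{\alpha}$, so $P_{j}(w)$ halts within $\alpha^{\alpha}$ steps with value $\chi_{X}(w)$, whence ``$P_{j}$ accepts $w$ within $\alpha^{\alpha}$ steps'' holds iff $w\in X$ --- contradicting the definition of $X$ at $w$.

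The substantive point, and where I expect the real work, is $X\in$ NP$^{\infty}$. The difficulty is that the budget $\alpha^{\alpha}$ is \emph{not} a fixed polynomial in $\alpha$, so a certificate cannot just be the length-$\alpha^{\alpha}$ run of $P_{i(\alpha)}(w)$. Instead, for $w$ with $\alpha=|w|\in D$ and $i:=i(\alpha)$, the NP$^{\infty}$-verifier guesses a subset of an ordinal of size $|\alpha|$ which, via Proposition~\ref{codes}, is meant to code a model $M\models\text{ZFC}^{-}$ having $\alpha$ in its well-founded part (with each $\iota<\alpha$ coded by $\iota$) and containing the function $w$, together with a guessed order isomorphism of the initial segment of $M$'s ordinals up to $\alpha^{\alpha}$ onto a genuine ordinal. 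By Lemma~\ref{quick checking} the first component is checkable in polynomial time, and the second certifies that $\alpha^{\alpha}$ lies in the well-founded part of $M$; the whole certificate stays polynomial in $|w|$ because $|\alpha^{\alpha}|=|\alpha|$. By Lemma~\ref{wf part admissible} the well-founded part of $M$ is admissible, hence closed under ordinal exponentiation, so it contains the genuine, absolutely computed run of $P_{i}$ on $w$ through $\alpha^{\alpha}$ steps; the verifier reads off the correct truth value of ``$P_{i}$ accepts $w$ within $\alpha^{\alpha}$ steps'' from $M$ and accepts precisely when it is $0$. Since a correct $M$ exists and any correct guess yields the true value, $w$ has an accepting certificate iff $w\in X$.

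The main obstacle is exactly the calibration in that last paragraph: making sure that all the facts the verifier checks --- ``codes a model of $\text{ZFC}^{-}$'', ``$\alpha^{\alpha}$ is well-founded in $M$'', and ``$M$'s simulation of $P_{i}(w)$ is the real one'' (the last via $\Sigma_{1}$-absoluteness of OTM-computations over the admissible well-founded part) --- are all verifiable within one fixed polynomial in $|w|$, while keeping $D$ and the tag $i(\cdot)$ simple enough that routing a proper class of diagonalization requirements through ordinal-indexed input lengths does not break polynomiality. Granting these bookkeeping matters, OTM-decidability, $X\notin$ P$^{\infty}$, and --- by Theorem~\ref{OTM SAT} --- failure of NP$^{\infty}$-completeness all follow as above.
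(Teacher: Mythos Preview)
Your strategy is exactly the paper's: diagonalize out of P$^{\infty}$ with a super-polynomial time budget, observe that OTM-decidability together with Theorem~\ref{OTM SAT} rules out NP$^{\infty}$-completeness, and obtain membership in NP$^{\infty}$ by guessing a polynomially-sized code for a ZFC$^{-}$-model whose well-founded part contains $|w|$, invoking Lemma~\ref{wf part admissible} to see that it then contains the entire simulated run. The paper's encoding of the program index is simpler than yours --- it reads it off directly as the finite tail $|x|_{0}$ of $|x|=\omega|x|_{1}+|x|_{0}$ and uses the time bound $|x|_{1}^{|x|_{1}}|x|_{1}$, rather than going through Cantor-normal-form exponents --- but that difference is cosmetic.

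There is one piece you should remove. You add to the certificate ``a guessed order isomorphism of the initial segment of $M$'s ordinals up to $\alpha^{\alpha}$ onto a genuine ordinal'' and justify its size by $|\alpha^{\alpha}|=|\alpha|$. That conflates cardinality with ordinal tape-length: an isomorphism whose range is the \emph{genuine} ordinal $\alpha^{\alpha}$ must name ordinals up to $\alpha^{\alpha}$ on the tape, so its length as an OTM-string is not bounded by any $\alpha^{n}\cdot\gamma$. Fortunately this component is redundant. Once the verifier has checked that $\alpha$ lies in the well-founded part of $M$ --- which \emph{is} polynomially checkable using the coding convention of Proposition~\ref{codes}, since one just verifies that the codes $\iota<\alpha$ form an initial segment of $M$'s ordinals --- Lemma~\ref{wf part admissible} already guarantees that the well-founded part is admissible and closed under ordinal exponentiation, hence contains $\alpha^{\alpha}$ and with it the full run of $P_{i}(w)$. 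The paper relies on precisely this and carries no explicit well-foundedness witness for the long computation; dropping your isomorphism leaves your argument correct and identical in substance to the paper's.
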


\begin{proof}
We construct such a problem $X\subseteq\{0,1\}^{**}$ by diagonalization. 
Let $X:=\{x\in\{0,1\}^{**}:P_{|x|_{0}}(x)\text{ does not halt in }\leq|x|_{1}^{|x|_{1}}|x|_{1}$ 



$\text{ many steps or does halt
in that many steps but rejects (i.e. outputs }0)\}$.

It is easy to see that $X$ is OTM-decidable: Given $x$, simply simulate $P_{|x|_{0}}(x)$ for $|x|_{1}^{|x|_{1}}|x|_{1}$ many steps and then flip the output (i.e. accept if the simulated computation rejects or does not halt, otherwise reject).

It is also clear that $X$ is not in $P^{\infty}$: If $P_{k}$ was an OTM-program that decides $X$ in time $\leq|x|^{\alpha}\beta$,
let $x\in\{0,1\}^{**}$ be such that $|x|>\text{max}\{\alpha,\beta\}$ and $|x|_{0}=k$; then $P_{k}(x)$ will give the wrong result by definition of $X$.

It remains to see that $X$ is in NP$^{\infty}$. Consider the class 

$X^{\prime}:=\{(x,y):x,y\in\{0,1\}^{**}\wedge`y\subseteq|x|^{2}\text{ codes a ZFC}^{-}\text{-model }M$ 

$\text{ with well-founded part of height }>|x|\text{'}\wedge`M\text{ believes that }P_{|x|_{0}}(x)\text{ does not halt in }\leq |x|_{1}^{|x|_{1}}|x|_{1}\text{ many steps or does halt in }
\leq |x|_{1}^{|x|_{1}}|x|_{1}\text{ many steps but rejects (i.e. outputs }0)\text{'}\}$. Such a code exists by Proposition \ref{codes} above.

The statement just given is a first-order statement in the parameter $x$ and can be evaluated in time polynomial in $|x|+|y|$, which is polynomial in $|x|$ as $|y|\leq|x|^{2}$ by assumption. 
By Lemma \ref{wf part admissible}, the computation within $M$ will belong to the well-founded part of $M$, and thus be an actual computation in $V$, so that $M$ will be correct about the result.

Hence $X^{\prime}$ belongs to $P^{\infty}$, and $X$, as the projection of $X^{\prime}$ to the first component, belongs to NP$^{\infty}$.

Thus $X$ is indeed an OTM-decidable (and thus NP$^{\infty}$-incomplete) problem in NP$^{\infty}\setminus$P$^{\infty}$, 
so $X$ is as desired.
\end{proof}

The above proof, only depending on the fact that KP-models are closed under ordinal polynomials, actually shows much more than P$^{\infty}\neq$NP$^{\infty}$. In fact, NP$^{\infty}$ is an extremely rich class:
For example, let us say that a class $X\subseteq\{0,1\}^{**}$ is EXPTIME$^{\infty}$ if and only if there are an OTM-program $P$, an ordinal $\alpha$ and an ordinal polynomial $p$ such that $P$ decides $X$
and works for $\leq\alpha^{p(\beta)}$ many steps on an input of length $\beta$. Similarly, let $X$ be EXPEXPTIME$^{\infty}$ if and only if this works with time bound $\alpha^{\alpha^{p(\beta)}}$. Then, by the argument, one also obtains:

\begin{corollary}
EXPTIME$^{\infty}$ and EXPEXPTIME$^{\infty}$ are properly contained in NP$^{\infty}$.
\end{corollary}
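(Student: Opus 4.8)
The plan is to adapt the diagonalization of Theorem \ref{inf ladner} verbatim, only replacing the time bound $|x|_1^{|x|_1}|x|_1$ used in the definition of $X$ by a faster-growing bound. For EXPTIME$^{\infty}$, fix a reasonable enumeration $(p_i : i \in \omega)$ of ordinal polynomials (with coefficients and exponents given by, say, ordinals coded in the index via Cantor normal form) and define
\[
X_{\text{EXP}} := \{x \in \{0,1\}^{**} : P_{(|x|_0)_0}(x) \text{ does not halt in } \leq |x|^{p_{(|x|_0)_1}(|x|)} \text{ steps, or halts and rejects}\},
\]
where $(|x|_0)_0$ and $(|x|_0)_1$ are the two components of the natural number $|x|_0$ under some fixed pairing. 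The point of pairing the index is that for \emph{every} program $P_k$ and \emph{every} polynomial $p_j$ we can still find arbitrarily long $x$ with $(|x|_0)_0 = k$ and $(|x|_0)_1 = j$, which is exactly what the diagonal argument needs.

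First I would check that $X_{\text{EXP}}$ is EXPTIME$^{\infty}$: on input $x$, compute $|x|$, decode the index, evaluate the ordinal polynomial $p_{(|x|_0)_1}$ at $|x|$ (this is polynomial in $|x|$), and simulate $P_{(|x|_0)_0}(x)$ for that many steps, flipping the output; the simulation dominates and runs in time $\leq |x|^{q(|x|)}$ for a suitable polynomial $q$, hence in some $\alpha^{p(\beta)}$ form as required. Next, the diagonal argument shows $X_{\text{EXP}} \notin \text{EXPTIME}^{\infty}$: if $P_k$ decided it with time bound $\leq |x|^{p_j(|x|)}$ for some polynomial $p_j$, pick $x$ with $(|x|_0)_0 = k$, $(|x|_0)_1 = j$, and $|x|$ large enough that $P_k(x)$ genuinely halts within $|x|^{p_j(|x|)} = |x|^{p_{(|x|_0)_1}(|x|)}$ steps; then $P_k(x)$ must contradict the definition of $X_{\text{EXP}}$. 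Finally, $X_{\text{EXP}} \in \text{NP}^{\infty}$ by the identical witness-and-verify argument of Theorem \ref{inf ladner}: the witness is a code $y \subseteq |x|^2$ of a ZFC$^-$-model whose well-founded part has height $> |x|$; since such a model's well-founded part is admissible (Lemma \ref{wf part admissible}) and hence closed under ordinal exponentiation, the model correctly computes the relevant initial segment of $P_{(|x|_0)_0}(x)$, and checking that the first-order description holds in the coded model is polynomial in $|x|$ by Lemma \ref{quick checking}. This gives proper containment $\text{EXPTIME}^{\infty} \subsetneq \text{NP}^{\infty}$; the EXPEXPTIME$^{\infty}$ case is literally the same with the bound $|x|^{p_{(|x|_0)_1}(|x|)}$ replaced by $|x|^{|x|^{p_{(|x|_0)_1}(|x|)}}$, noting that admissible sets are still closed under this (two applications of closure under exponentiation).

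The only point requiring a little care — and the one I would flag as the main obstacle — is making sure the \emph{verifier} stays polynomial-time even though the \emph{machine being diagonalized against} runs for exponentially (or doubly-exponentially) many steps: the verifier never simulates the computation itself, it only checks that a given ordinal-coded model $M$ satisfies a fixed first-order sentence asserting the outcome of the simulation, and this check is polynomial in $|x| + |y| \leq |x| + |x|^2$ by Lemma \ref{quick checking} regardless of how long the internal computation is. The correctness of $M$'s verdict rests entirely on the computation landing in the well-founded part, which is guaranteed because the well-founded part has height $> |x|$ and is admissible, hence closed under the ordinal polynomial / exponential stack bounding the runtime; since $p_{(|x|_0)_1}(|x|)$, $|x|^{p_{(|x|_0)_1}(|x|)}$, and $|x|^{|x|^{p_{(|x|_0)_1}(|x|)}}$ are all obtained from $|x|$ by finitely many admissible operations, this closure is exactly what we need. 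Everything else is a routine transcription of the proof of Theorem \ref{inf ladner}.
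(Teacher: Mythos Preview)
Your proposal has a slip and a more substantive gap. The slip: you first claim $X_{\text{EXP}}\in\text{EXPTIME}^{\infty}$ and then, a few lines later, that $X_{\text{EXP}}\notin\text{EXPTIME}^{\infty}$. The first claim is simply false, since the simulation bound $|x|^{p_{(|x|_0)_1}(|x|)}$ involves a polynomial that varies with $x$, so no single $q$ works uniformly (presumably you meant ``OTM-decidable'' here, mirroring the corresponding step of Theorem~\ref{inf ladner}).

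The real gap is that your construction only exhibits something in $\text{NP}^{\infty}\setminus\text{EXPTIME}^{\infty}$; it never establishes the containment $\text{EXPTIME}^{\infty}\subseteq\text{NP}^{\infty}$, which is the actual content of the corollary (classically this inclusion is open and would imply $\text{P}\neq\text{NP}$). The paper's point is that no new diagonalization is needed at all: the witness-and-model argument of Theorem~\ref{inf ladner}, applied to an \emph{arbitrary} problem $Y$ decided by some fixed program $Q$ in time $\alpha^{p(|x|)}$, already shows $Y\in\text{NP}^{\infty}$. The witness is again a ZFC$^{-}$-model coded by $y\subseteq|x|^{2}$ with well-founded part of height $>|x|$; since that part is admissible and hence closed under ordinal exponentiation, it contains the entire computation $Q(x)$ and is correct about its outcome, and the check is polynomial by Lemma~\ref{quick checking}. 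This is exactly the closure observation you isolate in your final paragraph, but it should be applied to a generic $Y\in\text{EXPTIME}^{\infty}$ rather than only to your diagonal language. Strictness then comes for free from Theorem~\ref{OTM SAT}: SAT$^{\infty}$ lies in NP$^{\infty}$ but is not even OTM-decidable. (Your diagonalization also has the technical problem that $(p_i:i\in\omega)$ cannot enumerate the proper-class many ordinal polynomials; Theorem~\ref{inf ladner} avoids this by using a single bound $|x|_1^{|x|_1}|x|_1$ that eventually dominates every polynomial once $|x|$ exceeds its constants --- but as noted, no diagonalization is required here in the first place.)
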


\section{Speedup and Strictness of the transfinite Polynomial Hierarchy}

A well-known theorem from classical complexity theory is the speedup-theorem, see e.g. [Hro]. This theorem says that, under certain mild conditions about the function $f$, if $0<c<1$ and there 
is a Turing program for deciding a certain language within time or space bounded by $f$ in the length of the input, then there is another Turing program deciding this language in time or space bounded
by $cf$ in the length of the input. This observation is crucial for classical complexity theory, as it justifies the introduction of $O(f)$-classes for measuring complexities. 
The proof idea is to let the new machine work on a considerably enriched alphabet, in which long strings of symbols of the original alphabet are condensed into
one symbol and thus processed in much fewer steps.

It is rather obvious that this approach will not work for infinitary machines: First, the alphabet is restricted to $\{0,1\}$; however, this is a formal limitation that could be overcome by slight changes
in the definition. More importantly, such a compression of the alphabet will not have much of an effect, since a finite time compression will not reduce the working time when it is a limit ordinal.

In fact, there are speedup theorems also for infinitary machines, such as the speedup-theorem for Infinite Time Turing Machines
by Hamkins and Lewis. These appear in the context of clockable ordinals, but they give in a sense only a much weaker speedup: Namely,
if there is a program $P$ that halts in $\alpha+n$ many steps and $1<n\in\omega$, then there is a program $P^{\prime}$ that halts
in $\alpha+1$ many steps. Note that this statement makes no reference to decision problems.

We will now show that there is in fact no analogue of the classical speedup-theorem for OTMs by showing that
there is a decision problem that is solvable in running time $\alpha\cdot 4$, but not in running time $\alpha\cdot 2$.

\begin{thm}{\label{noOTMspeedup}}
 There is a decision problem $\mathcal{L}\subseteq\Sigma^{**}$ such that $\mathcal{L}$ is decidable
in running time $\alpha\cdot 4$, but not in running time $\alpha\cdot 2$ (where $\alpha$ denotes the length of the input).
\end{thm}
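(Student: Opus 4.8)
The plan is to reuse the diagonalization of Theorem~\ref{inf ladner}, but with a much smaller time window --- one that can still be simulated inside $|w|\cdot 4$ steps, yet is large enough to defeat every program running in time $|w|\cdot 2$. Concretely I would set
\[
\mathcal{L}:=\{w\in\{0,1\}^{**}:P_{|w|_{0}}(w)\text{ does not halt in }\leq|w|\cdot2\text{ steps, or halts in }\leq|w|\cdot2\text{ steps but outputs }0\},
\]
reading off the program index from the finite part $|w|_{0}$ of the input length exactly as in the previous section. (For inputs of finite length both $|w|\cdot2$ and $|w|\cdot4$ are finite and the simulation overhead below need no longer fit; I would exclude this case by the convention that running-time bounds are imposed only on inputs of length $\geq\omega$, which is the only regime in which comparing $\alpha\cdot2$ with $\alpha\cdot4$ is meaningful.) The lower bound is then immediate, being literally the diagonal step of Theorem~\ref{inf ladner}: if $P_{k}$ decided $\mathcal{L}$ in running time $\alpha\cdot2$, take any $w$ with $|w|_{0}=k$ and $|w|\geq\omega$; then $P_{k}(w)=P_{|w|_{0}}(w)$ halts within $|w|\cdot2$ steps, so by definition $w\in\mathcal{L}$ iff $P_{k}$ rejects $w$, contradicting that $P_{k}$ decides $\mathcal{L}$.

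For the upper bound I would exhibit a decider $Q$ which, on input $w$, first scans the input once from left to right, detecting its end and simultaneously computing $|w|_{0}$ (keep a counter that is reset at limit stages and incremented at successors); this costs $|w|$ steps plus finitely many. It then decodes $P_{|w|_{0}}$ and uses the universal OTM to simulate $P_{|w|_{0}}(w)$, running in parallel a clock that makes two left-to-right passes over the input and advances by one tick per \emph{simulated} step; the simulation is stopped when $P_{|w|_{0}}(w)$ halts or when the second pass completes, i.e.\ after $|w|\cdot2$ simulated steps, whichever comes first. Finally $Q$ flips the outcome, accepting iff the simulated computation failed to halt within the window or halted with output $0$; correctness is read straight off the definition of $\mathcal{L}$. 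For the time bound, one step of a fixed program is simulated by the universal OTM in some fixed finite number $c$ of steps, so $\delta$ simulated steps take $\leq c\cdot\delta$ actual steps; since $|w|$, and hence $|w|\cdot2$, is a limit ordinal and $c$ is finite, $c\cdot(|w|\cdot2)=|w|\cdot2$. Adding the initial scan and the finite flipping step, $Q$ halts in $\leq|w|+|w|\cdot2+(\text{finitely many})=|w|\cdot3+(\text{finitely many})\leq|w|\cdot4$ steps.

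I expect the one delicate point to be the estimate ``$\delta$ simulated steps cost $\leq c\cdot\delta$'', with the finite factor on the \emph{left}, so that it is absorbed at limits: this relies on the limit stages of the simulation being handled with only boundedly much work, whereas a naive universal machine that had to rescan the simulated tape in order to recompute the $\liminf$ of the simulated head positions at a limit stage would incur an unbounded overhead, and the bound $|w|\cdot4$ would be lost. I would therefore invoke Koepke's universal OTM from \cite{OTM}, which has this linear-overhead property, rather than reconstruct it here. Everything else --- that the clock counts simulated rather than real steps, the bookkeeping for $|w|_{0}$, and the exclusion of finite-length inputs --- is routine, and the whole argument is, as the surrounding text promises, morally the same as the proof of Theorem~\ref{inf ladner}, only with the time window tuned so that the gap between the achievable bound $\alpha\cdot4$ and the unachievable bound $\alpha\cdot2$ becomes visible.
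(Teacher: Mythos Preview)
Your approach is essentially the paper's: diagonalize against all programs running in time $|w|\cdot2$, then decide the resulting language in time $|w|\cdot4$ by simulating the indexed program for $|w|\cdot2$ steps, using two left-to-right passes over the input as a stopwatch and flipping the outcome. The only structural difference is cosmetic: the paper encodes the index $i$ as the length of the initial run of $1$s in $w$ (read off from the \emph{content} in $\omega$ steps) rather than as $|w|_{0}$, which spares it the bookkeeping of extracting the finite part of the length during the scan.

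There is one genuine slip in your time analysis. You assert that ``$|w|$, and hence $|w|\cdot2$, is a limit ordinal'' and conclude $c\cdot(|w|\cdot2)=|w|\cdot2$. But for the diagonal step against $P_{k}$ with $k>0$ you need inputs with $|w|_{0}=k>0$, and then $|w|$ is a successor ordinal; so is $|w|\cdot2$, and for $c>1$ the claimed equality fails. The repair is exactly the Cantor normal form computation the paper carries out: writing $|w|=\omega\alpha^{\prime}+k$ with $k<\omega$, one has $c\cdot|w|=\omega\alpha^{\prime}+ck$, hence $(c\cdot|w|)\cdot2=\omega\alpha^{\prime}\cdot2+ck$; adding the initial pass and the finite tail gives at most $\omega\alpha^{\prime}\cdot3+ck+m<\omega\alpha^{\prime}\cdot4\leq|w|\cdot4$. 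So your upper bound survives, but not by the route you wrote down.
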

\begin{proof}
We prove this by diagonalization. To this end, we consider the language $\mathcal{L}\subseteq\{0,1\}^{**}$, where $w\in\{0,1\}^{**}$
belongs to $\mathcal{L}$ if and only if the following holds: Let $w^{\prime}$ be the initial segment
of $w$ of length $\omega$. Let $i=0$ if $w^{\prime}$ consists entirely of $1$s, and let $i$ be the length of the longest
initial segment of $w^{\prime}$ consisting entirely of $1$s otherwise. Now run $P_{i}$ on input $w$
for $|w|\cdot 2$ many steps. If the output is $1$, output $0$, otherwise (i.e. if the output is different from $1$
or there is no output as the program didn't halt in that time) output $1$.

\bigskip
\noindent
\textbf{Claim 1}: $\mathcal{L}$ is not OTM-decidable in time complexity $\alpha\cdot 2$. 

\bigskip
For suppose that $P_{j}$ was a program that 
decides $\mathcal{L}$ in running time bounded by $|w|\cdot 2$. Consider a word $w$
of the form $w=\underbrace{11...1}_{j\times}0w^{\prime}$ with $|w^{\prime}|>\omega$. If $P_{j}$ with input $w$ does not halt in $<|w|\cdot 2$ many steps,
it does not decide $\mathcal{L}$ in the desired running time. Otherwise, its output will be wrong by definition of $\mathcal{L}$.

\bigskip
\noindent
\textbf{Claim 2}: $\mathcal{L}$ is OTM-decidable in time complexity $\alpha\cdot 4$. 

\bigskip
To see this, we use a multitape-OTM, i.e. an OTM with multiple (but finitely many) tapes. 
Given $w$, we run through the first $\omega$ many symbols to determine $i$. Then, we write the $i$th OTM-program
to an extra tape, which will later on direct the simulation of $P_{i}$ on input $w$.
Further, we run through $w$ from left to right, marking an extra field with $1$ on two extra tapes $T_{0},T_{1}$ for each 
symbol of $w$; these will serve as a `stopwatch' for our simulation.
Now simulate $P_{i}$ on $w$; each simulation step will only take a bounded finite number $c$ of computation steps, which depends
only on $i$. For each simulation step, move the head to the right first on $T_{0}$ and, when the head arrives at the right border of $T_{0}$,
continue on $T_{1}$; when the right border of $T_{1}$ has been reached, stop the simulation.

The first phase needs $\omega+\alpha$ many steps, which is $\alpha$ for $\alpha$ sufficiently large (i.e. $\alpha\geq\omega^{2}$).
The simulation then takes $c\alpha\cdot 2$ many steps. Writing $\alpha=\omega\alpha^{\prime}+k$, $k\in\omega$,
we have $c(\omega\alpha^{\prime}+k)\cdot 2=(c\omega\alpha^{\prime}+ck)\cdot 2=(\omega\alpha^{\prime}+ck)\cdot 2=\omega\alpha^{\prime}\cdot2+2ck<
\alpha\cdot 4$, so that we get $<\alpha\cdot 4$ many steps in total.
\end{proof}

\noindent
\textbf{Remark}: As one can easily see from inspecting the proof, neither the choice of the constants nor of the 
function $\alpha\mapsto\alpha$ instead of e.g. $\alpha\mapsto\alpha^{2}$ makes a difference.

\bigskip
We now define a rather natural hierarchy on the $\infty$-polynomially decidable decision problems.

\begin{defini}
 For $\alpha\in\text{On}$, let us say that a class $X\subseteq\{0,1\}^{**}$ is $\mathcal{P}_{\alpha}$ if and only if there is an OTM-program $Q$
and $\beta\in\text{On}$ such that $Q$ decides $X$ and takes less than $\gamma^{\alpha}\beta$ many steps on an input of length $\gamma$.
\end{defini}

Clearly, every $\mathcal{P}_{\alpha}$-class is also $\mathcal{P}_{\beta}$ for $\alpha\leq\beta$. The $\mathcal{P}_{\alpha}$-classification is thus a stratification of the class P$^{\infty}$, which we
name the $\infty$-polytime hierarchy.

An easy adaption of the argument used for Theorem \ref{noOTMspeedup} yields:

\begin{corollary}
 The $\infty$-polytime hierarchy is strict: If $\alpha<\beta$ and $\alpha$ is the halting time of some OTM-program, there is a class $X\subseteq\{0,1\}^{**}$ which is $\mathcal{P}_{\beta}$, but not $\mathcal{P}_{\alpha}$.
\end{corollary}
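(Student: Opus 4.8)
The plan is to adapt the diagonalization from Theorem \ref{noOTMspeedup} almost verbatim, replacing the fixed time bounds $|w|\cdot 2$ and $|w|\cdot 4$ by the bounds $|w|^{\alpha}$ and $|w|^{\beta}\cdot\gamma$ that define the $\mathcal{P}_{\alpha}$-hierarchy. Concretely, I would fix $\alpha<\beta$ with $\alpha$ equal to the halting time of some OTM-program, and define $\mathcal{L}\subseteq\{0,1\}^{**}$ by: on input $w$, read the first $\omega$ symbols to extract an index $i$ (as the length of the longest initial block of $1$'s, or $0$ if that block is all of $w'$), then simulate $P_{i}(w)$ for $|w|^{\alpha}$ many steps; output $0$ if this simulation halts with output $1$, and output $1$ otherwise. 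The non-membership in $\mathcal{P}_{\alpha}$ is then the same argument as Claim 1: if $P_{j}$ decided $\mathcal{L}$ within $|w|^{\alpha}\cdot\delta$ steps for some $\delta$, one feeds it a word $w=\underbrace{1\cdots1}_{j}0w'$ with $|w'|$ large enough that $|w|^{\alpha}\cdot\delta$ is dwarfed by $|w|^{\alpha}$ in the relevant sense — more precisely one needs $|w|$ large enough that the simulation of $|w|^{\alpha}$ steps is long enough to see the (alleged) halting of $P_j(w)$, which holds as soon as $|w|>\delta$ and $|w|$ is a limit, so that $|w|^{\alpha}\cdot\delta\le|w|^{\alpha}\cdot|w| = |w|^{\alpha+1}$ but also $|w|^{\alpha}$ itself already exceeds $|w|^{\alpha}\cdot\delta$ once $|w|$ absorbs $\delta$ on the left; in any case $P_j$ then halts within the simulated budget and $\mathcal{L}$ is defined to flip its answer, a contradiction.

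For membership in $\mathcal{P}_{\beta}$ I would reproduce Claim 2 of Theorem \ref{noOTMspeedup} with a multitape OTM. The first phase — reading the first $\omega$ symbols, extracting $i$, and copying the code of $P_{i}$ to a work tape — costs $\omega+|w|$ steps, which is just $|w|$ for $|w|\ge\omega^{2}$, hence certainly $\le|w|^{\beta}$. The main phase simulates $P_{i}(w)$ for $|w|^{\alpha}$ steps, where each simulation step costs a finite constant $c=c(i)$ of real steps, and the stopwatch that counts off $|w|^{\alpha}$ simulated steps is maintained on auxiliary tapes whose relevant segments have length $|w|^{\alpha}$. This is where I would invoke the hypothesis that $\alpha$ is the halting time of an OTM-program: that is exactly what lets the machine \emph{construct} an index set / counting structure of order type $|w|^{\alpha}$ and recognize when $|w|^{\alpha}$ simulated steps have elapsed, within running time comparable to $|w|^{\alpha}$ itself (one runs the halting program that clocks $\alpha$, relativized so that its "unit" steps are themselves blocks of $|w|$ cells, analogously to how $\alpha\cdot2$ was realized by two tapes of length $\alpha$). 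The total real running time is then of the form $|w|^{\alpha}\cdot c$ up to lower-order additive terms; since $\alpha<\beta$ we have $|w|^{\alpha}\cdot c<|w|^{\alpha+1}\le|w|^{\beta}$ for $|w|$ infinite, so the whole computation fits in $|w|^{\beta}\cdot\gamma$ for a suitable constant $\gamma$ (indeed $\gamma=1$ suffices for large $|w|$, and finitely many small cases are absorbed).

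The main obstacle, as the "easy adaption" phrasing hints, is the book-keeping in the $\mathcal{P}_{\beta}$-direction: one must genuinely exhibit an OTM that halts after $|w|^{\alpha}$ simulated steps having \emph{counted} them, and the only uniform way to do this is to piggy-back on a program clocking the ordinal $\alpha$ — hence the hypothesis on $\alpha$. Once that counting device is in place, the ordinal-arithmetic estimate $\omega+|w|+c\cdot|w|^{\alpha}<|w|^{\beta}$ for infinite $|w|$ is routine (using $\alpha<\beta$ and $\alpha\ge 1$, so $|w|^{\alpha}\cdot c\le|w|^{\alpha}\cdot|w|=|w|^{\alpha+1}\le|w|^{\beta}$), and the diagonal lower bound is purely formal as above. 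One should also remark, as after Theorem \ref{noOTMspeedup}, that the specific multiplicative constants and the choice of the "padding via longest $1$-block" coding device are immaterial; the only thing that matters is that $\alpha$ be clockable so the time bound $|w|^{\alpha}$ can be counted out explicitly.
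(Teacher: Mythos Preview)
Your overall strategy matches the paper's, but the diagonalization as you set it up does not go through: the simulation budget $|w|^{\alpha}$ is too small. To rule out membership in $\mathcal{P}_{\alpha}$ you must defeat every program $P_{j}$ running in time $<|w|^{\alpha}\cdot\delta$ for \emph{some} fixed ordinal $\delta$, and for this you need the simulation to run long enough to witness $P_{j}(w)$ halting. Your claim that ``$|w|^{\alpha}$ itself already exceeds $|w|^{\alpha}\cdot\delta$ once $|w|$ absorbs $\delta$ on the left'' is false: ordinal multiplication absorbs small factors on the \emph{left}, not the right, so $\delta\cdot|w|^{\alpha}=|w|^{\alpha}$ for $\delta<|w|$, but $|w|^{\alpha}\cdot\delta>|w|^{\alpha}$ whenever $\delta\geq 2$, no matter how large $|w|$ is. Hence a $P_{j}$ with time bound $|w|^{\alpha}\cdot 2$ need not halt within your $|w|^{\alpha}$-step simulation, and the flip never fires.

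The paper's fix is exactly the one your own parenthetical inequality $|w|^{\alpha}\cdot\delta\leq|w|^{\alpha}\cdot|w|=|w|^{\alpha+1}$ points to: take the simulation budget to be $|w|^{\alpha}\cdot|w|$ rather than $|w|^{\alpha}$. Then for any fixed $\delta$ one chooses $|w|>\delta$ and gets $|w|^{\alpha}\cdot\delta<|w|^{\alpha}\cdot|w|$, so $P_{j}(w)$ really does halt inside the simulated window and the diagonal flip applies. The upper bound is unaffected: for $|w|>\alpha$ one has $|w|^{\alpha}\cdot|w|=|w|^{\alpha+1}\leq|w|^{\beta}$, and together with the constant-factor overhead of step-by-step simulation this fits under $|w|^{\beta}\cdot\omega$, placing $\mathcal{L}$ in $\mathcal{P}_{\beta}$. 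Your use of the hypothesis that $\alpha$ is a halting time --- to make the bound $|w|^{\alpha}\cdot|w|$ OTM-computable from $w$ so the stopwatch can be built --- is correct and is precisely how the paper invokes it.
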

\begin{proof}
Given $w$, define $i=i(w)$ as in the proof of Theorem \ref{noOTMspeedup}. Now run $P_{i}$ on input $w$ for $|w|^{\alpha}|w|$ many steps. If $P_{i}$ halts in that many steps with output $1$, then let $w\in\mathcal{L}$, otherwise $w\notin\mathcal{L}$. 

By the usual argument, $\mathcal{L}$ is not decidable in time bounded by the function $x^{\alpha}\cdot\gamma$ for any $\gamma\in\text{On}$: 
To see this, just assume that $P_{j}$ is an OTM-program that decides $\mathcal{L}$ within that time bound and pick $w\in\{0,1\}^{**}$ such that $i(w)=j$ and $|w|>\gamma$.

To see that $\mathcal{L}$ belongs to $\mathcal{P}_{\beta}$, notice that, for $|w|>\alpha$, we have
 $|w|^{\beta}\cdot 2>|w|^{\alpha}|w|$. Thus, $|w|^{\beta}\cdot\omega$ steps suffice to simulate $P_{i(w)}$ for
$|w|^{\alpha}|w| $ many steps on input $w$ and flip the output, which decides $\mathcal{L}$. As $\alpha$ is by assumption the halting time of some OTM-program, the time bound $|w|^{\alpha}|w|$ is OTM-computable in the input $w$.

\end{proof}



\section{Conclusion and Further Work}

In many respects, the complexity theory of OTMs resembles classical complexity theory; typical results from classical complexity theory also hold in the OTM-concept and can often be proved
by adaptions of the classical arguments to the infinitary framework. This suggests studying infinitary analogues of decision problems considered in classical complexity theory
and to see what their OTM-complexity is. In \cite{CLR}, this was done for SAT, and it turned out that the usual proof of the Cook-Levin theorem could be adapted to yield an analogue for OTMs.
However, things do not always go that smoothly: For example, while the independent set problem  (i.e. determining whether a given graph $G$
has a subset of $n$ vertices, no two of which are connected) has a straightforward infinitary analogue (which is obtained by replacing $n$ with an arbitrary ordinal), 
the classical reduction of SAT to the independent set problem no longer works in the infinitary context since infinitary sets can have infinitary subsets of the same size.


This motivates the general, if somewhat vague question: What is it about decision problems in the classical sense that allows an infinitary generalization? Is there a general transfer principle?

On the other hand, we plan to explore the complexity classes of problems from infinitary combinatorics, such as the existence of infinite paths in a given tree.


\end{document}